\font\bbb=msbm10 scaled \magstep1
\font\fff= eufm10 scaled \magstep 1
\newtheorem{theorem}{Theorem}[section]   % Numbered within each section
\newtheorem*{GRCP}{Generalized Road Coloring Problem}   % Numbered within each section
\newtheorem{corollary}[theorem]{Corollary}     % Numbered along with theorem
\newtheorem{lemma}[theorem]{Lemma}         % Numbered along with theorem
\newtheorem{proposition}[theorem]{Proposition}  % Numbered along with theorem
\newenvironment{remark}{{\bf Remark.\ }\rm}{\bigskip}  
\newenvironment{example}{{\bf Example.\ }\rm}{\bigskip}
\newenvironment{definition}{{\bf Definition.\ }\rm}{\bigskip}
\newenvironment{acknowledgment}{\bigskip{\bf Acknowledgment.\ }\rm}{\bigskip}    
\def\vep{\varepsilon}
\def\RR{\hbox{\bbb R}}
\def\avg#1.{\langle #1 \rangle}
\def\comment#1{}
\def\Mat{{\rm Mat}}
\def\tr{{\rm tr}\,}
\def\avg#1.{\langle #1 \rangle}
\newcommand{\sym}[2]{\ensuremath{{#1}{#2}{#1}^*}}
\newcommand{\range}{\operatorname{range}}
\newcommand{\card}{\operatorname{card}}
\newcommand{\dist}{\operatorname{dist}}
\newcommand{\mat}{\operatorname{Mat}}
\def\K{{\mathcal K}}
\def\A{{\mathcal A }}
\def\V{{\mathcal V}}
\def\E{{\mathcal E }}
\def\?#1?{\hbox{\fff #1}}
\def\tsp{\thinspace}
\def\vstrut{{\phantom{\biggm|}}}
\def\dequiv{\underset{\delta}{\equiv}}
\def\Sequiv{\underset{S}{\equiv}}
\def\dg#1.{{#1}^\dagger}                                                   %% define dagger
\newcommand{\spow}[2]{\ensuremath{{#1}^{\vee #2}}}  %% define sup
\def\hfb{\hfill\break}
\title[Periodic Digraphs, Generalized Road Coloring Problem]
{The Generalized Road Coloring Problem and Periodic Digraphs}
\author{G. Budzban and Ph. Feinsilver}
\address{Department of Mathematics\\
Southern Illinois University\\
Carbondale, IL\\
62901 USA}
\date{}
\begin{document}
\begin{abstract}
A proof of the Generalized Road Coloring Problem, independent of the recent work by Beal and Perrin,
is presented, using both semigroup methods and Trakhtman's algorithm.
Algebraic properties of periodic, strongly connected digraphs are studied in the semigroup context.
An algebraic condition which characterizes periodic, strongly connected digraphs is determined in the 
context of periodic Markov chains.
\end{abstract}
\maketitle

\thispagestyle{empty}

%%% Introduction %%%
\section{Introduction}  %1
There is a rich history of research in automata theory concerning synchronizability. Two problems, in particular, have contributed significantly to this history: \u{C}ern\'{y}'s Conjecture and the Road Coloring Problem.  Professor Trakhtman's recent solution of the Road Coloring problem is the conclusion to more than thirty years of investigation, while \u{C}ern\'{y}'s Conjecture remains a source of continued intense interest.\bigskip

Both of these problems were originally stated for the case where the underlying state transition digraph is strongly connected and aperiodic, since aperiodicity is clearly a necessary condition for the existence of a synchronizing instruction.  Yet, each of these problems has an easily stated generalization to the case where the underlying digraph is periodic of degree greater than 1.  A private communication with Professor Trakhtman pointed us to a recent Arxiv
posting by Professors Beal and Perrin where they prove this ``Generalized Road Coloring Problem" (GRCP)
\cite{BP}.\bigskip

We start with an independent proof of the GRCP using a semigroup approach and Trakhtman's algorithm.
Properties of periodic digraphs and the structure of their accompanying ``coloring semigroups''
are analyzed using methods initiated in \cite{Bu} 
and the algebraic techniques introduced in \cite{BPF06}, \cite{BMu},  and \cite{Fr}. 
Properties of periodic digraphs are then studied in the context of periodic Markov chains.
We prove a theorem for determining if a graph is periodic, no easy task if one is merely provided an adjacency matrix,
that leads to a constructive way for finding the periodic classes as well.

%%% The structure of the kernel of a coloring semigroup %%%
\section{The structure of the kernel of a coloring semigroup}  %3 

To begin the work of generalizing Trakhtman's result to the periodic case, we recall some basic notions from transformation semigroup theory.\bigskip

Start with $\A$, the adjacency matrix of a $d$-out strongly connected digraph, $G$.
A coloring of $G$ is a decomposition of $\A$ into $m$ binary (that 
is, $0\,\text{-}\,1$) stochastic matrices $R_1, R_2,\ldots, R_d$ such that $\A = R_1 + \cdots + R_d$. 
Intuitively, one can think of a matrix $R_{i}$ as assigning the $i$th of 
$d$ colors to the edge $(j,k)$ if $(R_i)_{jk} = 1$. 
Each $n\times n$ binary stochastic matrix $R_i$ can also be thought of as a function,
$\hat R_i$, say, on the vertices $\{1, 2,\ldots, n\}$, where $(R_i)_{jk} = 1$ iff $j\hat R_i = k$. \bigskip

Let $S = \langle R_1, R_2, \ldots, R_d \rangle$ be the semigroup generated by $R_1, R_2, \ldots, R_d$ under matrix multiplication (or composition). 
Clearly, $S$ is a finite semigroup which we will refer to as a coloring semigroup. The following classic theorem on the structure of the minimal ideal of a finite transformation semigroup can be found in many references, for example \cite{CP}.
Nothing which follows requires the assumption of aperiodicity. \bigskip 

\begin{remark} For $L\subset S$, $E(L)$ denotes the subset of idempotents in $L$.\end{remark}

\begin{theorem}
Let $S$ be a finite semigroup. 
Then $S$ contains a minimal ideal, $\K$, called the kernel which is a disjoint union of isomorphic groups. 
The kernel, $\K$, is isomorphic to $X \times G \times Y$,
 where, fixing $e \in  E(S)$, $e\K e$ is a group, $X = E(\K e)$, $G = e\K e$, and $Y = E(e\K)$. 
If $(x_1, g_1, y_1)$, $(x_2, g_2, y_2) \in X \times G \times Y$ then
\[
  (x_1, g_1, y_1)(x_2, g_2, y_2) 
	= (x_1, g_1(y_1x_2)g_2, y_2)\,.
\]
The product structure $X \times G \times Y$ is called a Rees product and any semigroup that has a Rees product is called completely simple. 
Thus the kernel of a finite semigroup is always completely simple.
\end{theorem}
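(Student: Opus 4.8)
The plan is to prove the four assertions in order --- existence and uniqueness of the kernel, its simplicity, the group structure of $e\K e$, and the Rees coordinatization together with the stated product formula --- using nothing beyond finiteness of $S$.

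First I would show $S$ has a unique minimal two-sided ideal. The family of ideals of $S$ is nonempty ($S$ itself) and finite, so it contains a minimal member $\K$. If $I$ is any ideal, then $\K I$ is a nonempty ideal contained in $\K\cap I\subseteq\K$, so minimality gives $\K I=\K$ and hence $\K\subseteq I$; thus $\K$ lies inside every ideal and is in particular unique. Next, $\K$ is simple as a semigroup: for an ideal $I$ of $\K$, the set $S^1IS^1$ (where $S^1$ is $S$ with an identity adjoined if necessary) is an ideal of $S$ contained in $\K$, hence equals $\K$, and then $\K=\K\K\K=\K\,(S^1IS^1)\,\K\subseteq\K I\K\subseteq I$. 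Finally $E(\K)\neq\emptyset$, since any $a\in\K$ generates a finite cyclic subsemigroup of $\K$ and every finite cyclic semigroup contains an idempotent. Fix $e\in E(\K)$; since $e=ee$ we get $e\in\K e\cap e\K$, so $e\in X\cap Y$ and $e\in G=e\K e$.

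The structural core is to show $G=e\K e$ is a group with identity $e$, and to describe $\K e$ and $e\K$. That $G$ is closed under multiplication and that $e$ acts as a two-sided identity on it are immediate from $e^2=e$. For invertibility I would exploit that, $\K$ being simple, $\K\kappa\K=\K$ for every $\kappa\in\K$: from this one shows that $\K e$ is a minimal left ideal and $e\K$ a minimal right ideal of $\K$, and then runs the classical translation argument --- left multiplication by a fixed element of $\K$ carries a minimal left ideal bijectively onto a minimal left ideal, and dually on the right --- to conclude that $e$ is the \emph{only} idempotent of $e\K e$, whence the finite monoid $e\K e$ is a group. (Equivalently: $e\K e$ is the $\H$-class of $e$, and the $\H$-class of an idempotent is a subgroup by Green's lemma.) The same translations show that each minimal right ideal of $\K$ contains exactly one idempotent of $\K e$ and each minimal left ideal contains exactly one idempotent of $e\K$, which is how $X=E(\K e)$ and $Y=E(e\K)$ come to index the $\R$- and $\L$-classes of $\K$.

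With $G$ a group, I would finish by establishing that every $\kappa\in\K$ has a unique factorization $\kappa=x\,g\,y$ with $x\in X$, $g\in G$, $y\in Y$: for existence, take $x\in X$ to be the idempotent generating the minimal right ideal $\kappa\K$ and $y\in Y$ the idempotent generating the minimal left ideal $\K\kappa$, then verify that $g:=x\kappa y\in e\K e$ and that $xgy=\kappa$; uniqueness follows from cancellation in the group $G$. Setting $\phi(\kappa)=(x,g,y)$ gives a bijection $\K\to X\times G\times Y$, and the multiplication rule falls out of associativity,
\[
 (x_1g_1y_1)(x_2g_2y_2)=x_1\,\bigl(g_1\,(y_1x_2)\,g_2\bigr)\,y_2 ,
\]
the middle factor lying in $G$ because $y_1x_2\in e\K\cdot\K e\subseteq e\K e=G$. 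This exhibits $\K$ as the Rees product $X\times G\times Y$, so $\K$ is completely simple. The one genuinely nontrivial step is the middle one, that $e\K e$ is a group: finiteness and simplicity of $\K$ do not by themselves make this automatic, and the essential input is that one-sided translations between minimal one-sided ideals of $\K$ are bijections, which is exactly what forces $e\K e$ to contain a single idempotent; everything before and after that is formal manipulation with ideals and idempotents.
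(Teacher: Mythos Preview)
The paper does not prove this theorem at all; it is stated as a classical result with a citation to Clifford--Preston, so there is no ``paper's own proof'' to compare against. Your outline follows the standard route one finds in such references and is essentially correct in its architecture: existence of a minimal ideal by finiteness, simplicity of $\K$, existence of idempotents via finite cyclic subsemigroups, the group structure of $e\K e$ via Green's lemma / translations between minimal one-sided ideals, and finally the Rees coordinatization.

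There is, however, a concrete slip in your factorization step. You propose to set $g:=x\kappa y$ and then ``verify that $g\in e\K e$.'' But with your choices of $x$ and $y$ --- namely $x$ the idempotent in $\K e$ generating $\kappa\K$ and $y$ the idempotent in $e\K$ generating $\K\kappa$ --- one has $x\kappa=\kappa$ (since $\kappa\in x\K$ and $x$ is a left identity there) and $\kappa y=\kappa$ (dually), so $x\kappa y=\kappa$, which is generally \emph{not} in $e\K e$. A $2\times 2$ rectangular band already shows this: with $e=(1,1)$ and $\kappa=(2,2)$ you get $x=(2,1)$, $y=(1,2)$, and $x\kappa y=(2,2)\notin e\K e=\{(1,1)\}$. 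The correct recovery of the middle coordinate uses the group $G$: for instance $g=(ex)^{-1}(e\kappa e)(ye)^{-1}$ with inverses taken in $G$, or, more cleanly, one simply shows that the map $(x,g,y)\mapsto xgy$ from $X\times G\times Y$ to $\K$ is a bijection (surjectivity via simplicity, injectivity via the group cancellation you already invoke) without writing down its inverse explicitly. With that correction your sketch is the standard proof.
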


Suppose $S = \langle R_1, R_2,\ldots, R_d \rangle$ is a coloring semigroup for a given $d$-out digraph $G$. 
As stated above, $S$ will be a finite semigroup with kernel $\K = X \times G \times Y$. 
It can be shown \cite{WEC} that each of the elements of $\K$ have the same rank. 
Thus we can refer to the rank of the kernel, rank$(\K)$. 
Let $M$, $N$ be elements of $\K$. 
Then with respect to the Rees product structure $M = (M_1,M_2,M_3)$ and $N = (N_1,N_2,N_3)$. 
We will consider the structure of $\K M$, $N\K$, and $N\K M$:\bigskip
\begin{enumerate}
\item[1)] $\K M = X \times G \times \{M_3\}$ is a minimal left ideal in $\K$ whose elements all have the same range, or nonzero columns as $M$.\bigskip

\item[2)] $N\K = \{N_1\} \times  G \times Y$ is a minimal right ideal in $\K$ whose elements all have the same partition of the vertices as $N$. 
If $P$ is an equivalence class in the partition, then there exists a nonzero column $j$ such that
\[
  P = \{i : N_{ij} = 1\}\,.
\]

\item[3)] $N\K M$ is the intersection of $N\K$ and $\K M$ and is a maximal group in $\K$. 
In this instance, it is best thought of as a set of one-to-one functions specified by the partition of $N$ and the range of $M$. 
The idempotent of $N\K M$ is the function which is the identity when restricted to the range of $M$.\bigskip
\end{enumerate}

Suppose $w = (w_1, w_2, \ldots, w_n)$ is such that $w\A = dw$. 
We may and do scale $w$ so that each $w_i$ is a positive integer and that they are relatively prime.
If $U$ is a subset of the vertices $V$, we define $W(U) = \sum\limits_{j\in U} w_j$ to be the 
\textsl{Friedman weight} of $U$. 
It can be shown that if $N\K$ is an arbitrary minimal right ideal in $\K$ with partition $\Pi = \{P_1,P_2,\ldots,P_r\}$, then for all $i$, $1 \le i \le r$, $w(P_i) = \dfrac{W(V)}{r}$. 
In the terminology of Friedman, each partition of the kernel consists of maximally synchronizing sets (see the proof of Theorem 7 in \cite{BMu}).\bigskip

Suppose a subset of vertices $B = \range(K)$ for some $K \in \K$. An important property of $B$ that will play a role in this paper is that $B$ is a \textit{cross-section} of every partition $\Pi$ from $\K$. That is, for each element $P$ of $\Pi$, card$(P \cap  B) = 1$ (see \cite{CP}).

\section{Periodic Graphs, Quotient Graphs, Cross Sections, and the structure 
	of the Kernel}  

%%% Periodic graphs %%%
\begin{subsection}{Periodic graphs} 

The most useful definition of a periodic graph is the following.\bigskip

\begin{definition}
A digraph $G = (V, E)$ is periodic of period $t \ge 2$ iff there exists a partition of the vertices 
$\{P_1, P_2, \ldots, P_t\}$ such that if $(i,j)\in  E$ and $i \in P_k$ then $j \in P_{k+1}$ ($i \in P_t$ implies $j \in P_1)$, and $t$ is the largest such integer with this property. 
In this case, $\Pi =\{P_1, P_2, \ldots, P_t\}$ is the ``periodic partition".
If no such partition exists, then $G$ is aperiodic. 

\end{definition}
\end{subsection}

%%% Quotient graphs, cross sections, and the structure of the kernel %%%
\subsection{Quotient graphs, cross sections, and the structure of the kernel}

In \cite{BPF07}, quotient graphs generated by the unique partition induced by a right group kernel in a coloring semigroup were analyzed. 
In that paper, it was shown that if the kernel of any coloring semigroup is a right group, then the graph has some coloring semigroup that has a synchronizing instruction. 
The algorithm designed by Kari in \cite{Ka} was essential to this argument. 
Perhaps the most important concept introduced by Kari was the notion of stability. 
Kari showed that stability induced a congruence on the vertices 
of the graph, and it was this congruence that produced the  quotient graph instrumental in his result.\bigskip

We will translate Kari stability into the language of transformation semigroups.\bigskip

\begin{definition}
Let $S = \langle R_1, R_2,\ldots, R_d \rangle$ be a coloring semigroup for the digraph $G = (V, E)$. 
Then for vertices $x$, $y$, we say $x\Sequiv y$ under the stability relation if and only if for every $W_1 \in S$, there exists a $W_2 \in S$ such that $xW_1 W_2 = yW_1 W_2$.\bigskip

If $\K = X \times G \times Y$ is the kernel of a coloring  semigroup $S$, then 
$\displaystyle\K = \bigcup_{N\in X} N \K$, 
where each $N\K$ is a right group with a fixed partition $\Pi_N$. 
Let $\E_N$ be the equivalence relation induced by $\Pi_N$. Finally let $\E = \cap  \E_N$.
\end{definition}

\begin{theorem} 
For vertices $x$, $y$ in $V$, $x\E y$ if and only if $x\Sequiv y$.
\end{theorem}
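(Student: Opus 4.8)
The plan is to prove the two inclusions separately, exploiting the Rees product structure of the kernel $\K$ and the fact (recalled in the excerpt) that the range of any $K\in\K$ is a cross-section of every partition $\Pi_N$ arising from a minimal right ideal $N\K$.

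First I would prove that $x\E y$ implies $x\Sequiv y$. Fix $x,y$ with $x\E y$, and let $W_1\in S$ be arbitrary. I want to produce $W_2$ with $xW_1W_2=yW_1W_2$. The natural choice is to drive everything into the kernel: pick any $K\in\K$ and set $W_2'=W_1'K$ for a suitable $W_1'\in S$, so that $W_1W_2'\in\K$, say $W_1W_2'=N\K$-element $M$ with partition $\Pi_N$. Then $xW_1W_2'$ and $yW_1W_2'$ are the images of $x,y$ under $M$, and two vertices have the same image under $M$ precisely when they lie in the same block of $\Pi_N=\Pi_M$. Since $x\E y$ means $x,y$ lie in the same block of $\E=\bigcap_N\E_N$, in particular they lie in the same block of $\Pi_N$, so $xW_1W_2'=yW_1W_2'$; take $W_2$ to be the tail segment of $W_2'$ after $W_1$. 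The only point needing care here is the bookkeeping that any element of $S$ can be right-multiplied into any prescribed minimal right ideal $N\K$, which follows from minimality of the ideal together with $\K$ being the kernel.

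Next I would prove the converse, $x\Sequiv y$ implies $x\E y$; this is the direction I expect to be the main obstacle, since from the a priori weaker, "for every $W_1$ there exists $W_2$" quantifier structure of $\Sequiv$ one must extract membership in the same block of \emph{every} $\E_N$ simultaneously. Fix a minimal right ideal $N\K$ with partition $\Pi_N$; I must show $x\E_N y$. Choose $W_1$ so that the action of $W_1$ already realizes $\Pi_N$ — concretely, pick $W_1$ to be (a word evaluating to) an element of $N\K$ itself, or more carefully an element $M\in\K$ whose partition is exactly $\Pi_N$ and whose range $B=\range(M)$ is the chosen cross-section. By hypothesis there is $W_2$ with $xW_1W_2=yW_1W_2$. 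Now $xW_1$ and $yW_1$ are two vertices of $B$, and since $B$ is a cross-section of every partition coming from $\K$ — in particular of the partition of $MW_2$, which still lies in $\K$ — distinct vertices of $B$ cannot be collapsed by any element of $\K$; but $W_2$ maps them to the same vertex, and composing with a further kernel element if necessary we may assume $W_1W_2\in\K$, forcing $xW_1=yW_1$. Finally $xW_1=yW_1$ with $W_1\in N\K$ means exactly that $x$ and $y$ lie in the same block of $\Pi_N$, i.e.\ $x\E_N y$. As $N$ was arbitrary, $x\E y$.

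The crux, then, is the cross-section argument in the converse: the identity $B=\range(K)$ meeting each block of each kernel partition in exactly one point is what upgrades the single equation $xW_1W_2=yW_1W_2$ into the injectivity statement $xW_1=yW_1$, and hence into information about $\Pi_N$ rather than merely about some later, composite partition. I would also double-check the edge case where the relation is applied with $W_1$ equal to an idempotent of $N\K$, which streamlines the identification "same image under an element of $N\K$" $\Longleftrightarrow$ "same $\E_N$-block", and note that nothing in this argument uses aperiodicity, consistently with the remark preceding Theorem~1.
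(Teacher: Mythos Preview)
Your proof is correct and follows essentially the paper's approach: both directions push words into the kernel and exploit the partition structure there. The paper's converse is a touch simpler---arguing by contrapositive with $W_1$ taken to be an idempotent $N\in\K$, it uses only that $NW\in N\K$ still has partition $\Pi_N$ for every $W\in S$, so $xN\ne yN$ already forces $xNW\ne yNW$ for all $W$, making the cross-section detour you single out as ``the crux'' unnecessary.
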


\begin{proof} 
Suppose $x\E y$. 
Then for any partition $\Pi$ from the kernel, there exists an element $P$ of $\Pi$, such that $\{x,y\} \subset P$. 
Thus for any $K \in \K$, $xK = yK$. 
Let $W \in S$ be any word, then $xWK = yWK$ since $WK \in \K$. Therefore, $x\Sequiv  y$.\bigskip

Suppose $x$ and $y$ are not equivalent under $\E$. 
Then there is some idempotent $N$ in $\K$ with partition $\Pi_N$ containing elements $P_1 \ne P_2$ such that $x \in P_1$ and $y \in P_2$, which is to say that $xN \ne yN$. 
But for all words $W \in S$, $NW \in N\K$. 
But this implies that $xNW \ne yNW$. 
Therefore $x$ and $y$ are not equivalent under $\Sequiv$.
\end{proof}

Since Kari showed \cite{Ka} that stability is a congruence, it follows from the previous theorem that a well-defined quotient graph (with less vertices than the original graph) exists precisely when $\E = \cap \E_N$ is not the discrete equivalence.\bigskip

\begin{corollary}  
Let $G$ be a periodic graph with periodic partition $\Pi=\{P_1,\ldots,P_t\}$.  
Let $[v]$ be an equivalence class of vertices induced by the stability relation.  
Then $[v] \subset P_k$ for some $k$, $1\le k\le t$.
\end{corollary}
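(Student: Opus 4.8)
The plan is to show that the stability relation $\Sequiv$ cannot identify two vertices lying in different blocks of the periodic partition $\Pi$, and then invoke Theorem~\ref{...} (the equivalence of $\E$ and $\Sequiv$) to conclude. First I would observe a basic distance-modulo-$t$ invariant: if $(i,j)\in E$ and $i\in P_k$ then $j\in P_{k+1}$, so every edge advances the block index by $1\pmod t$. Consequently, for any word $W\in S$ of length $\ell$ (a product of $\ell$ of the generators $R_1,\ldots,R_d$), if $x\in P_a$ then $xW\in P_{a+\ell}$, with indices read mod $t$. This is the engine of the whole argument.

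Next I would take $x\in P_a$, $y\in P_b$ with $a\not\equiv b\pmod t$, and suppose toward a contradiction that $x\Sequiv y$. By definition of $\Sequiv$ applied with $W_1$ equal to the identity (or, if one prefers to stay inside $S$ without an identity, with $W_1$ any generator), there exists $W_2\in S$ with $xW_2=yW_2$. But if $W_2$ has length $\ell$, then $xW_2\in P_{a+\ell}$ and $yW_2\in P_{b+\ell}$; since $a+\ell\not\equiv b+\ell\pmod t$ whenever $a\not\equiv b\pmod t$, the blocks $P_{a+\ell}$ and $P_{b+\ell}$ are distinct and disjoint, so $xW_2\neq yW_2$ — a contradiction. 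Hence any two $\Sequiv$-equivalent vertices lie in a common block $P_k$, which is exactly the statement $[v]\subset P_k$.

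Finally, to phrase it in the language the corollary is stated in, I would note that by the preceding theorem $[v]$ is simultaneously the $\E$-class of $v$, so the same conclusion holds for $\E$; alternatively one can argue directly that every partition $\Pi_N$ from the kernel refines the periodic partition. Indeed the argument above shows more: for a single idempotent $N\in\K$ of length $\ell$, $x\in P_a$ forces $xN\in P_{a+\ell}$, so the block of $\Pi_N$ containing $x$ sits inside $P_{a+\ell}$; taking $x,y$ in the same block of $\Pi_N$ forces $a\equiv b\pmod t$, and hence $x,y$ lie in the same block of $\Pi$. Intersecting over all $N$ preserves this, so $\E$ refines $\Pi$.

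The argument is short; the only point requiring a little care is the legitimacy of taking $W_1$ trivial in the definition of $\Sequiv$ when $S$ is a semigroup without identity — but this is harmless, since one may instead fix any $W_1\in S$, obtain $W_2$ with $xW_1W_2=yW_1W_2$, and run the length-count on the word $W_1W_2$; the block indices of $x$ and $y$ still differ by the fixed amount $b-a\not\equiv 0\pmod t$ after applying any common word, so the conclusion is unaffected. I expect no real obstacle here; the substantive content is entirely carried by the period-$t$ block-advancement invariant, which is immediate from the definition of a periodic digraph.
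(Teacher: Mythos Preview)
Your argument is correct and is essentially the paper's own proof: the paper simply picks $K\in\K$ with $xK=yK=z$ and observes that paths of the common length $|K|$ land $x$ and $y$ at the same vertex, forcing them to start in the same periodic block---exactly your length-count on the block index mod $t$. Your version is a bit more explicit about the block-advancement invariant and about avoiding an identity in $S$, but the substance is identical.
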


\begin{proof}  
If $x \Sequiv  y$ then $xK = yK = z$ for some $K \in \K$.
Thus a path of length $|K|$ connects both $x$ and $y$ to $z$, and therefore $x$ and $y$ must have started in the same periodic partition class.
\end{proof}

In \cite{Tr}, one of the key notions utilized by Trakhtman is that of an $F\,\text{-}\,$clique.\bigskip

\begin{definition}
Let $B = \range(W)$ for some $W \in S$. 
The $B$ is called an $F\,\text{-}\,$clique 
if and only if for any pair $x \ne y$ in $B$, $xU \ne yU$ for all words $U \in S$. 
\end{definition}

Once again, the structure of the kernel provides insight.

\begin{theorem} 
A set of vertices $B=\range(W)$, for some $W\in S$,  is an $F\,\text{-}\,$clique if and only if $W \in \K$. 
\end{theorem}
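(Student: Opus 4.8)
The plan is to prove both implications using the Rees product structure of the kernel together with the two facts recalled at the end of Section 2: that every element of $\K$ has the same (minimal) rank, and that $B = \range(K)$ is a cross-section of every partition $\Pi$ coming from $\K$.

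For the easy direction, suppose $W \in \K$ and set $B = \range(W)$. Take $x \ne y$ in $B$ and any word $U \in S$. I want $xU \ne yU$. The clean way is to pass to an element of the kernel: $WU \in \K$ since $\K$ is an ideal. Now $\range(WU) \subseteq \range(W)\,\hat U$, and since $WU$ has the same rank as $W$, the map $\hat U$ must be injective on $\range(W) = B$; hence $xU \ne yU$. (Alternatively, one argues directly that $x\hat U = y\hat U$ would force $\range(WU)$ to have strictly smaller cardinality than $\range(W)$, contradicting that all kernel elements share the same rank.) So $B$ is an $F$-clique. The one point to be careful about is the claim $\rk(WU) = \rk(W)$; this is exactly the statement, cited to \cite{WEC} above, that all elements of $\K$ have the same rank, applied to $WU \in \K$ and any fixed reference element — combined with the general fact that $\rk(WU) \le \rk(W)$ for composable maps.

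For the converse, suppose $W \notin \K$ and let $B = \range(W)$. Since $W \notin \K$, the rank of $W$ is strictly larger than $\rk(\K)$, i.e. $\card(B) > \rk(\K)$. Pick any $K \in \K$; then $\range(WK) \subseteq B\,\hat K$ has cardinality at most $\rk(\K) < \card(B)$, so $\hat K$ is not injective on $B$: there exist $x \ne y$ in $B$ with $x\hat K = y\hat K$, i.e. $xK = yK$. Taking $U = K \in S$ exhibits a word collapsing two points of $B$, so $B$ fails the $F$-clique condition.

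The main obstacle — really the only subtlety — is making sure the rank bookkeeping is airtight: that $\rk(W) > \rk(\K)$ for $W \notin \K$ (a standard fact, since the set of elements of minimal rank forms the kernel of a transformation semigroup), and that composition cannot increase rank, so that an element of $\K$ factoring through $B$ forces $\hat K$ (resp. $\hat U$) to be injective on $B$ in the first direction and non-injective in the second. Once these are in hand, both directions are short. I would state the rank facts explicitly as a preliminary observation, citing \cite{WEC} and \cite{CP}, and then give the two paragraphs above essentially verbatim.
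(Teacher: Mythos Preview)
Your proof is correct. The converse direction ($W\notin\K$ implies $B$ is not an $F$-clique) is essentially identical to the paper's: both pick $K\in\K$, note $\card(BK)<\card(B)$ because $WK\in\K$ has the minimal rank, and conclude that $\hat K$ collapses some pair in $B$.

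For the forward direction you take a slightly different route than the paper. The paper invokes the cross-section property directly: assuming $xU=yU$ for $x\ne y$ in $B=\range(K)$, it multiplies on the right by $K$ to get $xUK=yUK$, so $\{x,y\}$ lie in a single block of the partition $\Pi_{UK}$, contradicting that $B$ is a cross-section of $\Pi_{UK}$. You instead argue by rank: $WU\in\K$ forces $\rk(WU)=\rk(W)$, hence $\hat U$ is injective on $B=\range(W)$. Your argument is a touch more self-contained (it never unpacks the partition structure), while the paper's makes explicit use of the cross-section fact it set up in Section~2; both are short and either is fine here. Since you announce the cross-section property in your opening paragraph but never actually use it, you could drop that sentence.
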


\begin{proof} 
Suppose $B = \range(K)$ where $K \in \K$. 
Then, as indicated above, $B$ is a cross-section of each partition. 
Suppose for some pair $x \ne y$ in $B$, and some $U \in S$, 
$xU = yU$. 
Then $xUK = yUK$, implying that $\{x,y\} \subset P$, for some $P\in \Pi_{UK}$. 
But this contradicts that $B$ is a cross-section of $\Pi_{UK}$.\bigskip

Suppose $B = \range(W)$ for some $W \notin \K$. 
Since $W \notin \K$, then $\card(\range W)$ is not minimal. 
Then $\card (BK) < \card(\range W)$ since $WK \in \K$. 
But this implies that for some pair $x$, $y$ in $B$, $xK=yK$. Thus $B$ is not an $F\,\text{-}\,$clique.
\end{proof}

The key insight of this section is that critical pieces of both Kari's and Trakhtman's ideas can be restated in transformation semigroup language, and in this setting it becomes clear that nothing about them requires the assumption of aperiodicity. 

%%% The Generalized Road Coloring Problem %%%
\section{The Generalized Road Coloring Problem}  %5

To complete the proof using Trakhtman's algorithm, we will use the following result which will allow us to complete the necessary inductive argument.\bigskip

\begin{theorem}  
Let $G$ be a graph with per$(G) = t$.  Let $Q(G)$ be a quotient graph of $G$ induced by the stability classes of a coloring semigroup.  
Then per$(Q(G)) = t$.
\end{theorem}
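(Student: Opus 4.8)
The plan is to exploit the fact that the stability congruence \emph{refines} the periodic partition of $G$, so that the periodic partition of $G$ pushes forward to a cyclic vertex partition of $Q(G)$, while conversely any cyclic vertex partition of $Q(G)$ pulls back to one of $G$; the maximality clause in the definition of ``period'' then pins both periods to the same value $t$.

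First I would collect the two inputs. By the Corollary above, each stability class $[v]$ is contained in a single block $P_k$ of the periodic partition $\Pi=\{P_1,\dots,P_t\}$ of $G$, and since the $P_k$ are pairwise disjoint it lies in exactly one of them. Since stability is a congruence (Kari \cite{Ka}, together with the theorem above identifying $\Sequiv$ with $\E$), each generator $\hat R_i$ descends to a well-defined transformation of the stability classes with $[u]\hat R_i=[u\hat R_i]$; hence the edges of $Q(G)$ are precisely the pairs $([u],[u\hat R_i])$ with $u\in V$ and $1\le i\le d$, and in particular the quotient map $v\mapsto[v]$ is a graph homomorphism carrying edges of $G$ to edges of $Q(G)$. (As a homomorphic image of a strongly connected graph, $Q(G)$ is again strongly connected.)

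Next I would prove the two inequalities. For $\mathrm{per}(Q(G))\ge t$, set $\bar P_k=\{\,[v] : [v]\subseteq P_k\,\}$; by the previous paragraph $\{\bar P_1,\dots,\bar P_t\}$ is a partition of the vertex set of $Q(G)$ into $t$ nonempty blocks, and if $([u],[u\hat R_i])$ is an edge of $Q(G)$ then $(u,u\hat R_i)\in E(G)$, so $u\in P_k$ forces $u\hat R_i\in P_{k+1}$, hence $[u]\in\bar P_k$ forces $[u\hat R_i]\in\bar P_{k+1}$ (indices read cyclically). Thus $\{\bar P_k\}$ exhibits the cyclic edge property for $Q(G)$ with $t$ blocks, so $\mathrm{per}(Q(G))\ge t$ by the definition of period. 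For $\mathrm{per}(Q(G))\le t$, write $s=\mathrm{per}(Q(G))$ with periodic partition $\{\bar Q_1,\dots,\bar Q_s\}$ of the vertices of $Q(G)$, and lift it to $Q_j=\bigcup_{[v]\in\bar Q_j}[v]\subseteq V$. Because the stability classes partition $V$, the $Q_j$ partition $V$ into $s$ nonempty blocks, and whenever $(u,v)\in E(G)$ we have $([u],[v])\in E(Q(G))$, so $u\in Q_j$ forces $v\in Q_{j+1}$. Hence $\{Q_1,\dots,Q_s\}$ has the cyclic edge property in $G$, and maximality of $t=\mathrm{per}(G)$ gives $s\le t$. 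Combining the two bounds yields $\mathrm{per}(Q(G))=t$.

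I do not expect a genuine obstacle here: there is no estimate to make, and the whole argument reduces to correctly invoking the Corollary (stability refines the periodic partition) and the congruence property (so that $Q(G)$ is well-defined and $v\mapsto[v]$ is a homomorphism), then applying the maximality clause in the definition of period in both directions. The only points that require care are the routine verifications that the pushed-forward family $\{\bar P_k\}$ and the pulled-back family $\{Q_j\}$ are genuine partitions into nonempty blocks enjoying the cyclic edge property, and that ``per'' is being compared consistently---for instance, when $t\ge 2$, that $\mathrm{per}(Q(G))\ge 2$, so $Q(G)$ is honestly periodic and the definition of its period applies.
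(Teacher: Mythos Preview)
Your proof is correct. It differs from the paper's in the characterization of period you work with: the paper argues via cycle lengths, observing that every cycle of $G$ projects to a cycle of $Q(G)$ of the same length (so $\mathrm{per}(Q(G))\le\mathrm{per}(G)$ by comparing common divisors), and conversely that any cycle in $Q(G)$ through $[v]$ has length divisible by $t$ because $[v]\subset P_k$ (so $\mathrm{per}(G)\le\mathrm{per}(Q(G))$). You instead stay with the partition definition throughout, pushing the periodic partition of $G$ forward to $Q(G)$ and pulling the periodic partition of $Q(G)$ back to $G$. Both arguments hinge on the same Corollary (stability refines the periodic partition) and on $v\mapsto[v]$ being a graph homomorphism; your route has the small advantage of not invoking the equivalence between the partition and gcd-of-cycle-lengths descriptions of period, which the paper uses tacitly.
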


\begin{proof}  
Let per$(G) = t$ and let $\Pi = \{P_1,\ldots,P_t\}$ be the periodic partition of the vertices such that for any edge $(v,w)$ if $v \in P_k$, then $w \in P_{k+1}$  (here $P_{t+1} = P_1$).\bigskip

Let $C(G)$ be the cycles of $G$ and let $c$ be an element of $C(G)$  starting and ending at $v$.  
Then $[c]$ is an element of $C(Q(G))$ that starts and ends at $[v]$, where $[v]$ is the equivalence class of $v$.  
Thus $|C(G)| \subset |C(Q(G))|$, where $|C(G)| = \{|c| : c \in C(G)
\}$. 
Therefore the set of common divisors of $C(Q(G))$ is a subset of the set of common divisors of $C(G)$.  
But the period is the greatest of these common divisors, so that per$(Q(G)) \le \hbox{per}(G)$.\bigskip

Now let $c$ be a cycle in $Q(G)$ starting and ending at $[v]$.  
But $[v] \subset P_k$ for some $k$.  Thus $|c| = nt$ for some $n$. 
But then $t = \hbox{per}(G)$ is a divisor of all cycle lengths for $Q(G)$, which implies that per$(G) \le \hbox{per}(Q(G))$.  
Thus $t = \hbox{per}(G) = \hbox{per}(Q(G))$.
\end{proof}
\bigskip

The analogous result concerning the quotient graph being strongly connected is clear.\bigskip

In \cite{BMu}, the following generalization of the Road Coloring Problem was stated.\bigskip

\begin{GRCP}
Let $G = (V,E)$ be a strongly connected $d$-out digraph. 
Then $G$ is periodic of period $t \ge 1$ with periodic partition $\Pi = \{P_1, \ldots, P_t\}$ iff $G$ has a minimal (with respect to the rank of its kernel) coloring semigroup $S = \langle R_1\ldots, R_d \rangle$ whose kernel $\K$ is a right group with partition $\Pi$ and whose maximal groups are all cyclic of order $t$.
\end{GRCP}

\begin{proof}  
Observe that the constant functions form a right group with partition  $\Pi = \{V\}$ and whose maximal groups are all order one, thus trivially cyclic.  
In [2], it was shown that the existence of a coloring semigroup whose kernel $\K$ is a right group where rank$(\K)$ is minimal implies that the graph $G$ is periodic of order rank$(\K)$ and that the structure groups of $\K$ are cyclic of order rank$(\K)$.\bigskip

Notice, if $S$ is a coloring semigroup whose kernel $\K$ is a right group where rank$(\K) = \hbox{per}(G)$, then any element of $\K$ maps the set of vertices onto a $t$-subset of vertices, one vertex in each of the periodic partition classes, and this is the best one can do.  Thus we will refer to existence of this minimal right group as ``$t$-synchronizability''. \bigskip

For the converse, we will proceed by induction.  
Now the smallest $t$-periodic graph has $t$ vertices with each vertex in its own periodic partition class.  
Clearly, any coloring of this graph produces a coloring semigroup whose kernel is a cyclic group (thus a right group) of order $t$, and thus satisfies the conclusion of the theorem.  
Assume that all $t$-periodic graphs with $n-1$ vertices or less have been shown to be $t$-synchronizable, and let $G$ be $t$-periodic with $n$ vertices. \bigskip

Let $S_{\min} = \{S_1,\ldots,S_r\}$ be the collection of coloring semigroups of $G$ whose kernels $\{\K_1,\ldots,\K_r\}$ have minimal rank.  
If for some $j$, $\K_j$ is a right group, then rank$(\K_j) = t$, and $G$ is $t$-synchronizable.  
Thus we may assume that none of the kernels of minimal rank are right groups.  
If any of the kernels have non-trivial stability classes, then their quotients are strongly connected and $t$-periodic, and by the induction hypothesis are $t$-synchronizable.  
Thus suppose all of the minimal rank kernels have trivial stability classes.  
In this case, Trakhtman's remarkable algorithm, which at no point uses the assumption of aperiodicity, produces a coloring with  nontrivial stability class, and thus a contradiction, and we are done. 
\end{proof}

%%% Properties of Periodic Markov Chains %%%
\section{Properties of Periodic Markov Chains} %2

Let $G$ be a $d$-out digraph with  adjacency matrix $\A$.
Dividing through by the outdegree produces a stochastic matrix, $A$.  
The question is:\medskip

Is there a condition on $A$ that determines if the graph is periodic, and can one use this condition to determine the periodic partition of $G$? \medskip

We will assume that $A$ is irreducible or, equivalently, 
that $G$ is strongly connected.\bigskip

We will consider stochastic matrices in general.
So for the remainder of this work, we assume that $A$ is a stochastic matrix that is the transition
matrix for an irreducible Markov chain.\bigskip

The number of vertices is $\card V=n$. Denote by $\V$ the corresponding $n$-dimensional vector space
$\RR^n$.\bigskip

\begin{remark}We use the summation convention that Greek indices are summed over
regardless of position.
\end{remark}

%%%%%%%%%%%%%%%%%%%%%

%%% Level 2 action. Aperiodic case. %%%
\subsection{Level 2 action. Aperiodic case.}
\bigskip

\begin{definition} Let ${\rm Sym}(\V)$ be the cone of $n\times n$ real symmetric matrices with nonnegative entries.
Denote by ${\rm Sym}_0(\V)$ the cone of $n\times n$ symmetric matrices having nonnegative entries 
with all diagonal entries equal to zero. 
\end{definition}

We consider the action $X\mapsto AXA^*$ on ${\rm Sym}(\V)$. This is the {\sl level 2 action\/} of the transition
matrix $A$.\medskip

%%% Aperiodic case %%%
\begin{subsubsection}{Aperiodic case}

Begin with the lemma:

\begin{lemma}\label{lem:irrep}
Let $A$ be an irreducible, aperiodic stochastic matrix. Then, on ${\rm Sym}_0(\V)$, the equation
$AXA^*=X$ has the unique solution $X=0$.
\end{lemma}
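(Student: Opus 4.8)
The plan is to exploit the fact that the level 2 action $X \mapsto AXA^*$ interacts well with positivity. Since $A$ is nonnegative and $X$ has nonnegative entries, $AXA^*$ again has nonnegative entries; the constraint $X \in {\rm Sym}_0(\V)$ forces the diagonal to vanish. First I would observe that a fixed point $X = AXA^*$ is then automatically a fixed point of every iterate: $X = A^k X (A^*)^k = A^k X (A^k)^*$ for all $k$. Because $A$ is irreducible and aperiodic, some power $A^k$ is strictly positive (every entry $> 0$). I would then look at a diagonal entry of $A^k X (A^k)^*$: writing $B = A^k$ with all $B_{i\mu} > 0$, the $(i,i)$ entry is $\sum_{\mu,\nu} B_{i\mu} X_{\mu\nu} B_{i\nu}$, which must equal $X_{ii} = 0$.

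The key step is to conclude from $\sum_{\mu,\nu} B_{i\mu} X_{\mu\nu} B_{i\nu} = 0$, with all coefficients $B_{i\mu} B_{i\nu}$ strictly positive and all $X_{\mu\nu} \ge 0$, that every $X_{\mu\nu} = 0$. This is immediate: a sum of nonnegative terms is zero only if each term is zero, and since the coefficients are strictly positive, each $X_{\mu\nu}$ must vanish. Hence $X = 0$, which is the claimed unique solution, and conversely $X=0$ obviously satisfies the equation.

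I expect the only subtlety — not really an obstacle — to be the justification that aperiodicity plus irreducibility yields a strictly positive power of $A$; this is the standard Perron–Frobenius fact (equivalently, the digraph $G$ being strongly connected and aperiodic has the property that for some $k$ there is a path of length exactly $k$ between any ordered pair of vertices), so I would simply invoke it. It is worth noting for the sequel that exactly this step breaks in the periodic case: if $A$ has period $t \ge 2$, no power is strictly positive, and one instead gets constraints coupling only those $X_{\mu\nu}$ whose indices lie in periodic classes at a fixed distance — which is presumably where the next results in the paper pick up.
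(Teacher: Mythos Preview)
Your argument is correct and follows essentially the same route as the paper's: iterate the fixed-point equation, use irreducibility and aperiodicity to obtain a matrix with all entries strictly positive, and then read off from a vanishing diagonal entry of $X$ that every $X_{\mu\nu}$ must be zero. The only cosmetic difference is that the paper passes to the limit $\Omega=\lim_{m\to\infty}A^m$ and takes a trace, whereas you work with a single finite positive power $A^k$ and a single diagonal entry; the positivity-and-nonnegativity punchline is identical.
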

\begin{proof}
Iterating the map $X\mapsto AXA^*$ yields $X=A^mX(A^*)^m$ after $m$ steps. Since $A$ is irreducible and
aperiodic, $\displaystyle \lim_{m\to\infty} A^m=\Omega$ exists where $\Omega$ is a rank-one stochastic
matrix with positive entries. Thus, taking $m\to\infty$, we have $X=\Omega X\Omega^*$. Taking traces yields:
$$ 0=\tr X=\tr X\Omega^*\Omega=\sum_{i,j}X_{ij}(\Omega^*\Omega)_{ji}$$
Now, $(\Omega^*\Omega)_{ij}>0$, $\forall i,j$, so $X_{ij}$ vanishes for all $i,j$.
\end{proof}
\end{subsubsection}

%%% Periodic case %%%
\begin{subsection}{Periodic case}
Recall the periodic partition $\{P_1, P_2, \ldots, P_t\}$. For vertices $i,j\in V$, define
$$\dist(i,j)=\min\{\,|s_1-s_2|,|s_1-s_2-t|\,\} \qquad\text{for\ }i\in P_{s_1}, j\in P_{s_2}$$
i.e., it is the smallest difference between the indices of the partition classes they are in, taken modulo $t$.
Define the corresponding indicator matrices $X^{(\delta)}$ by 
$$X^{(\delta)}_{ij}=\begin{cases}1, & \text{if $\dist(i,j)=\delta$} \cr 0, & \text{otherwise} \cr\end{cases}
$$
These are $0\,\text{-}\,1$ matrices with 
\begin{equation}\label{eq:ortho}
\tr X^{(\delta_1)}X^{(\delta_2)}=0
\end{equation}
 if $\delta_1\ne\delta_2$.
As they correspond to a partition of all pairs of vertices, we have $\displaystyle \sum_\delta X^{(\delta)}=J$,
the all-ones matrix. We denote the corresponding equivalence relations
\begin{align*}
&& i\dequiv j &&\text{ if } \dist(i,j)=\delta
\end{align*}
So $i\underset{0}{\equiv} j$ if and only if $i$ and $j$ are in the same partition class of the periodic
partition. The periodicity of $A$ is expressed by the property that 
$$\text{if } A_{ii'}A_{jj'}>0, \text{ then } i\dequiv j \text{ if and only if } i' \dequiv j'$$
since each vertex maps to the next partition class. Effectively, $A$ acts isometrically on the partition classes.\bigskip

\begin{remark}For clarity, we will not use implied summations over $\delta$, repeated or not.
\end{remark}

Observe that $X^{(0)}\in{\rm Sym}(\V)$, with $1$'s on the diagonal, while \hfb
$X^{(\delta)}\in{\rm Sym}_0(\V)$, for $\delta>0$.
\begin{proposition}\label{prop:sols}
For $\delta\ge0$, $X^{(\delta)}$ satisfies $AX^{(\delta)}A^*=X^{(\delta)}$.
\end{proposition}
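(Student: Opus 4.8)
The plan is to verify the identity entrywise, by a direct computation of $(AX^{(\delta)}A^*)_{ij}$ that exploits the periodicity property displayed just before the statement. Since $A$ is real, $(A^*)_{lj}=A_{jl}$, so the first step is to write
$$(AX^{(\delta)}A^*)_{ij}=\sum_{k,l}A_{ik}\,X^{(\delta)}_{kl}\,A_{jl}.$$
Only terms with $A_{ik}>0$ and $A_{jl}>0$ contribute. For such a term, periodicity of $A$ forces $k$ to lie in the partition class immediately following that of $i$, and $l$ to lie in the class immediately following that of $j$; hence $\dist(k,l)=\dist(i,j)$, so that $X^{(\delta)}_{kl}=X^{(\delta)}_{ij}$ for \emph{every} term that actually appears in the sum.

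Next I would pull the (now constant) factor $X^{(\delta)}_{ij}$ out of the sum and use that $A$ is stochastic: $\sum_{k,l}A_{ik}A_{jl}=\bigl(\sum_k A_{ik}\bigr)\bigl(\sum_l A_{jl}\bigr)=1$. This gives $(AX^{(\delta)}A^*)_{ij}=X^{(\delta)}_{ij}$ directly, with no case analysis strictly needed --- although one may equivalently phrase it as the two cases $\dist(i,j)=\delta$ (where every surviving term contributes $A_{ik}A_{jl}$ and those entries sum to $1$) and $\dist(i,j)\neq\delta$ (where every surviving term has $X^{(\delta)}_{kl}=0$, so the entry is $0$). The case $\delta=0$, where $X^{(0)}$ carries $1$'s on the diagonal, is handled uniformly by the same argument.

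There is essentially no hard step here: the content is entirely in the displayed periodicity property, i.e. $A_{ii'}A_{jj'}>0 \Rightarrow (i\dequiv j \Leftrightarrow i'\dequiv j')$, which says that $A$ acts isometrically on the periodic partition classes and hence preserves $\dist$ along edges. The only points requiring a little care are bookkeeping with the transpose in $AXA^*$, and noting that irreducibility enters only to guarantee that the periodic partition --- and hence each $X^{(\delta)}$ --- is well defined in the first place.
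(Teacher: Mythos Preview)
Your proof is correct and follows essentially the same approach as the paper: an entrywise computation using that $A_{ik}A_{jl}>0$ forces $\dist(k,l)=\dist(i,j)$, together with stochasticity of $A$. The paper phrases it via the explicit case split $i\dequiv j$ versus $i\not\underset{\delta}{\equiv}j$ (which you also note as an equivalent formulation), but the argument is the same.
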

\begin{proof} The entries of $AXA^*$ are
$$(AXA^*)_{ij}=A_{i \lambda}X_{\lambda \mu}A_{j\mu}$$
Fix $\delta\ge0$. Consider $i,j$ such that $i\dequiv j$.
For every pair $(l,m)$ such that $i\to l$ and $j\to m$, $l\dequiv m$ as well. Thus, for all such pairs,
$X^{(\delta)}_{lm}=1$ and, with $u\in\V$ denoting the vector of all $1$'s,
$$(AX^{(\delta)}A^*)_{ij}=A_{i \lambda}u_\lambda A_{j\mu}u_\mu=1=X^{(\delta)}_{ij}$$
For $i\not\underset{\delta}{\equiv}j$, $X^{(\delta)}_{lm}=0$ for all choices of $l$ and $m$ such that
$A_{il}A_{jm}>0$, so that $(AX^{(\delta)}A^*)_{ij}=0$. Thus, 
$AX^{(\delta)}A^*=X^{(\delta)}$ as required.
\end{proof}

The main property we need is this.

\begin{lemma}\label{lem:SW}
Let $X\in{\rm Sym}(\V)$ satisfy $\sym{A}{X}=X$. 
If $X_{i_0j_0}=c$ for any one entry $i_0\dequiv j_0$, then $X_{ij}=c$ for all $i\dequiv j$.
\end{lemma}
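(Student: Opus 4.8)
The plan is to exploit the fact that $A$ acts isometrically on the periodic partition classes together with strong connectivity. First I would unwind the fixed-point equation: since $\sym{A}{X}=X$, iterating gives $X = A^m X (A^*)^m$ for every $m$, so
$$X_{ij} = \sum_{\lambda,\mu} (A^m)_{i\lambda}\,X_{\lambda\mu}\,(A^m)_{j\mu}.$$
Fix a pair $i_0 \dequiv j_0$ with $X_{i_0 j_0} = c$. The key observation — which comes directly from the periodicity property stated just before Proposition \ref{prop:sols} — is that whenever $(A^m)_{i\lambda}(A^m)_{j\mu} > 0$ and $i \dequiv j$, we must have $\lambda \dequiv \mu$; that is, the only entries $X_{\lambda\mu}$ that appear in the expansion of $X_{ij}$ for $i\dequiv j$ are themselves entries indexed by pairs at distance $\delta$. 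So the subspace spanned by $\{X_{\lambda\mu} : \lambda \dequiv \mu\}$ is invariant, and on it the map is governed by a genuine (sub)stochastic-type averaging.

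Next I would use strong connectivity to show that all these "distance-$\delta$" entries are forced to be equal. Pick any two pairs $(i,j)$ and $(i',j')$ both at distance $\delta$. The plan is to find an $m$ and indices so that both $X_{ij}$ and $X_{i'j'}$ are expressed as convex combinations (with coefficients summing to $1$, by stochasticity of $A^m$) of the same collection of distance-$\delta$ entries $X_{\lambda\mu}$. Concretely, because $G$ is strongly connected there is a path from $i$ to $i_0$; within the periodic structure, paths of a common length $m$ can be chosen from $i$ to $i_0$ and from $j$ to $j_0$ (adjusting $m$ by multiples of $t$ using strong connectivity and the fact that $i\dequiv j$, $i_0\dequiv j_0$ keeps the residues aligned). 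Then $(A^m)_{i i_0}(A^m)_{j j_0}>0$. Running this argument uniformly, I would argue that the function $(i,j)\mapsto X_{ij}$ on distance-$\delta$ pairs is harmonic for an irreducible averaging operator on the (finite) set of distance-$\delta$ pairs, hence constant — analogously to how Lemma \ref{lem:irrep} uses convergence of $A^m$. In fact the cleanest route is: the distance-$\delta$ pairs, under the joint action $(\lambda,\mu)\mapsto$ (successors), form a strongly connected "product" structure on which $A\otimes A$ restricts to an irreducible stochastic matrix whose stationary-type argument forces $X$ restricted there to be a constant eigenvector; taking $m\to\infty$ along the appropriate subsequence (or passing to $A^t$, which is aperiodic on each class) yields $X_{ij} = c$ for all $i \dequiv j$.

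The main obstacle I anticipate is the bookkeeping needed to guarantee that paths of a \emph{common} length connect an arbitrary distance-$\delta$ pair to the fixed pair $(i_0,j_0)$ while staying within the distance-$\delta$ stratum — i.e., that the restriction of the level-$2$ action to distance-$\delta$ pairs is irreducible. Strong connectivity gives paths of \emph{some} lengths; the periodic partition forces those lengths into fixed residue classes mod $t$, and one must check these residues are compatible for the two coordinates simultaneously (this is where $i\dequiv j$ and $i_0\dequiv j_0$ are used). Once irreducibility of the restricted action is established, the conclusion follows by the same trace/positivity mechanism as Lemma \ref{lem:irrep}, applied to $X - cX^{(\delta)}$ (which still satisfies $\sym{A}{\,\cdot\,}=\,\cdot\,$ by Proposition \ref{prop:sols}) restricted to the distance-$\delta$ entries, forcing it to vanish there.
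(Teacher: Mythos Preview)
Your outline is close in spirit to the paper's argument---both iterate $X=A^mX(A^*)^m$, pass to the limit along powers of $A^t$, and exploit that entries of $X$ on distance-$\delta$ pairs are convex combinations of other distance-$\delta$ entries---but there is a genuine gap in your irreducibility claim. The ordered pair chain (the restriction of $A\otimes A$ to $\{(i,j):\dist(i,j)=\delta\}$) is \emph{not} irreducible when $0<\delta<t/2$: a pair $(i,j)\in P_a\times P_{a+\delta}$ can only reach pairs in $\bigcup_b P_b\times P_{b+\delta}$, never pairs in $P_b\times P_{b-\delta}$. So your ``harmonic on an irreducible chain, hence constant'' step only yields constancy on each of the two signed-$\delta$ communicating classes separately; you then need the symmetry $X_{ij}=X_{ji}$ (which swaps the two classes) to conclude the two constants agree. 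You flag the bookkeeping as the main obstacle but do not actually carry it out, and the obstacle is real rather than merely clerical.

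The paper resolves this more concretely and avoids the abstract harmonic-function language: it takes $c_1$ to be the \emph{minimum} of $X_{ij}$ over $i\dequiv j$ (not the given value $c$), so that $Y=X-c_1X^{(\delta)}$ remains in ${\rm Sym}(\V)$ with a zero entry at some $(i_1,j_1)\in P_a\times P_b$. Writing $Y=\Omega_t Y\Omega_t^*$ with $\Omega_t=\lim_m (A^t)^m$ having strictly positive entries within each class, positivity forces $Y$ to vanish on all of $P_a\times P_b$. The second signed component is then handled by an explicit shift: choose $s$ with $A^s$ mapping $P_a\to P_{a-\delta}$ and $P_b\to P_a$, and repeat the positivity argument with $\Omega_tA^s$ to kill $Y$ on $P_{a-\delta}\times P_a$; symmetry finishes. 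Note in particular that your closing suggestion to apply the trace/positivity mechanism of Lemma~\ref{lem:irrep} to $X-cX^{(\delta)}$ does not work as stated, since with $c=X_{i_0j_0}$ arbitrary this difference need not be nonnegative; the minimum-entry choice is essential.
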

\begin{proof}
From irreducibility, with period $t$, we have
$$\lim_{m\to\infty} (A^t)^m=\Omega_t$$
exists and if $i,j$ are in the same partition class, $(\Omega_t)_{ij}>0$, \cite[pp.\tsp 177-178]{D}. 
First, iterating with $A$, we have $\sym{A^t}{X}=X$, then iterating with $A^t$, we have
$A^{mt}X(A^{mt})^*=X$. Taking the limit as $m\to \infty$ yields
\begin{equation}\label{eq:limit}
\sym{\Omega_t}{X}=X \qquad \text{ or } \quad X_{ij}=(\Omega_t)_{i\lambda}(\Omega_t)_{j\mu}X_{\lambda\mu}
\end{equation}
Find the minimum nonzero entry  of $X_{ij}$ where $i\dequiv j$. Call it $c_1$, with $X_{i_1j_1}=c_1$. We have
$i_1\in P_a$, $j_1\in P_b$, where with no loss of generality $b=a+\delta \mod{t}$.
Then $Y=X-c_1X^{(\delta)}$ is a nonnegative solution to $\sym{A}{Y}=Y$ with 
$Y_{i_1j_1}=0$.  Applying eq. \eqref{eq:limit} for $Y_{i_1j_1}$ we have
$$0=(\Omega_t)_{i_1\lambda}(\Omega_t)_{j_1\mu}Y_{\lambda\mu}$$Since $(i_1,j_1)\in P_a\times P_b$, every pair $(\lambda,\mu)\in P_a\times P_b$.  Now, the $\Omega_t$ factors are all positive, and each term of the sum is nonnegative. Hence, the terms vanish identically. I.e., $Y_{ij}=0$ for all
$(i, j)\in P_a\times P_b$. \medskip

It is possible that $P_b$ is the only partition class $\delta$ units away from $P_a$.
Typically, however, the class $\delta$ units going the other way, $P_{a-\delta}$, indices mod $t$, is not
$P_b$. Call this class that precedes $P_a$ by distance $\delta$, $P'$. 
Moving cyclically along the partition classes, choose the power $s$ so that $A^s$ maps $P_a$ to $P'$, automatically
 mapping $P_b$ to $P_a$. We now have
$$ \Omega_t A^sY(A^*)^s\Omega_t^*=Y \quad \text{ and }\quad
0=(\Omega_t)_{i_1\lambda}(A^s)_{\lambda \sigma}Y_{\sigma\vep}(A^s)_{\mu\vep}(\Omega_t)_{j_1\mu}
$$
with $i_1\to \lambda\to \sigma$ and $j_1\to \mu\to \vep$. Notice the pairs $(i_1,j_1)\in P_a\times P_b$,
$(\lambda,\mu)$ run through all elements of $P_a\times P_b$, and
$(\sigma,\vep)\in P'\times P_a$. As before, all the $\Omega_t$
factors are positive. Since $A$ is irreducible, $A$ maps each partition class {\sl onto\/} the succeeding one.
So $A^s$ maps $P_a$ onto $P'$ and $P_b$ onto $P_a$. Thus all possible choices $(\sigma,\vep)\in P'\times P_a$
occur with positive $A$ coefficients for some choices $(\lambda,\mu)$. 
Hence the $Y$ terms vanish identically and $Y_{ij}=0$ on
$P'\times P_a$. By symmetry of $Y$, $Y_{ij}$ vanishes as well on $P_b\times P_a$ and $P_a\times P'$.
In other words, $Y_{ij}=0$ if $i\dequiv j$.\medskip

Going back to $X$, in fact, $X_{i_0j_0}=c=X_{i_1j_1}=c_1=X_{ij}$ for all $i\dequiv j$ as required.
\end{proof}

We noticed above that the matrices $X^{(\delta)}$ are orthogonal for different $\delta$, in the sense
of equation \eqref{eq:ortho}. 

\begin{lemma}\label{lem:decomp}
Let $X\in{\rm Sym}(\V)$ satisfy $AXA^*=X$. Then
$X-cX^{(\delta)}$, where 
$$c=\tr XX^{(\delta)}/\tr (X^{(\delta)})^2$$
is in ${\rm Sym}(\V)$.
\end{lemma}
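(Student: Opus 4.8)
The plan is to reduce the whole statement to Lemma~\ref{lem:SW}. First I would note that the only thing that needs checking is nonnegativity of the entries of $X-cX^{(\delta)}$: it is automatically symmetric, since $X$ and $X^{(\delta)}$ are both symmetric, and $c$ is a scalar. So the goal is to show $X_{ij}-cX^{(\delta)}_{ij}\ge 0$ for every pair $(i,j)$.

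Next I would split the pairs $(i,j)$ according to whether $\dist(i,j)=\delta$ or not. If $\dist(i,j)\ne\delta$ then $X^{(\delta)}_{ij}=0$, so the $(i,j)$ entry of $X-cX^{(\delta)}$ is just $X_{ij}\ge 0$ and there is nothing to do. All the content sits on the block of pairs with $\dist(i,j)=\delta$, i.e.\ the pairs with $i\dequiv j$. On this block I would invoke Lemma~\ref{lem:SW}: since $X\in{\rm Sym}(\V)$ satisfies $AXA^*=X$, that lemma says $X_{ij}$ takes one common value, say $c_\delta$, over all pairs with $i\dequiv j$.

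Then I would identify $c$ with $c_\delta$ by a one-line trace computation. Because $X^{(\delta)}$ is a $0\,\text{-}\,1$ symmetric matrix, $\tr(X^{(\delta)})^2=\sum_{i,j}(X^{(\delta)}_{ij})^2=M_\delta$, the number of ordered pairs at distance $\delta$ (assumed nonzero, i.e.\ $X^{(\delta)}\ne0$, so the quotient makes sense), while $\tr XX^{(\delta)}=\sum_{\dist(i,j)=\delta}X_{ij}=c_\delta M_\delta$ using the constancy just obtained. Hence $c=\tr XX^{(\delta)}/\tr(X^{(\delta)})^2=c_\delta$, and on the $\delta$-block the entry of $X-cX^{(\delta)}$ is $c_\delta-c_\delta\cdot 1=0$. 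Combining the two cases, every entry of $X-cX^{(\delta)}$ is nonnegative, so $X-cX^{(\delta)}\in{\rm Sym}(\V)$, as claimed.

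There is essentially no obstacle once Lemma~\ref{lem:SW} is in hand; the point worth flagging is that the assertion is genuinely \emph{false} without the rigidity that lemma provides. Since $c$ is the average of the entries of $X$ over the $\delta$-block, $X-cX^{(\delta)}$ can be nonnegative on that block only if those entries are all equal, so all the real work is concealed in the appeal to Lemma~\ref{lem:SW}. The only genuinely degenerate situation, $X^{(\delta)}=0$ (when no pair of vertices is at distance $\delta$, e.g.\ $\delta>t/2$), must be excluded or read as vacuous, since then $c$ is not defined.
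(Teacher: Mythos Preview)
Your argument is correct and follows the same route as the paper: split into the cases $i\not\dequiv j$ (where $X^{(\delta)}_{ij}=0$) and $i\dequiv j$, and in the latter case invoke Lemma~\ref{lem:SW} to see that all the entries $X_{ij}$ with $i\dequiv j$ share a common value, which forces equality rather than just an inequality. The paper phrases the $\delta$-block step as verifying $X_{ij}\,\tr(X^{(\delta)})^2\ge \tr(XX^{(\delta)})\,X^{(\delta)}_{ij}$ and then observing both sides coincide, whereas you compute $c=c_\delta$ directly; the content is the same.
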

\begin{proof} Looking at $ij$ entries, we wish to show that
$$X_{ij}( X_{\lambda \mu}^{(\delta)}X_{\lambda \mu}^{(\delta)})
\ge (X_{\lambda \mu}^{(\delta)}X_{\lambda \mu})X^{(\delta)}_{ij}$$
where there is no implied summation over $\delta$. If $i\not\underset{\delta}{\equiv}j$, 
then the right-hand side is zero, so the inequality is satisfied. For $i\underset{\delta}{\equiv}j$,
the right-hand side equals
$$\sum_{l\dequiv m}X_{lm}$$
By Lemma \ref{lem:SW}, the terms of the sum are all equal, say $X_{lm}=c$, if $l\dequiv m$. Then
the sum equals $c$ times the number of $1$'s in the matrix $X^{(\delta)}$, which is exactly the
left-hand side. The result follows.
\end{proof}

\begin{theorem}\label{th:one}
Let $A$ be periodic. Every solution $X\in{\rm Sym}(\V)$ to $AXA^*=X$ is a linear combination
$\displaystyle \sum_{\delta\ge0} c_\delta X^{(\delta)}$ with nonnegative coefficients $c_\delta$.
\end{theorem}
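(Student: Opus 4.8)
The plan is to peel off the components $X^{(\delta)}$ one $\delta$ at a time, using Lemma~\ref{lem:decomp} repeatedly, and argue that the process terminates with the zero matrix. First I would observe that by Lemma~\ref{lem:SW}, if $X\in{\rm Sym}(\V)$ solves $AXA^*=X$, then $X$ is constant on each $\dequiv$-class of pairs: there is a scalar $c_\delta$ (namely $c_\delta = \tr XX^{(\delta)}/\tr(X^{(\delta)})^2$) such that $X_{ij}=c_\delta$ whenever $i\dequiv j$. The candidate decomposition is therefore forced: $X=\sum_{\delta\ge0}c_\delta X^{(\delta)}$ as a pointwise identity, once we know every entry of $X$ lies in some $\dequiv$-class (which it does, since $\{X^{(\delta)}\}$ corresponds to a partition of all pairs of vertices and $\sum_\delta X^{(\delta)}=J$). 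So the content of the theorem is really the nonnegativity of the $c_\delta$.

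For nonnegativity, I would run the following descent. Start with $X_0=X$. Given $X_k\in{\rm Sym}(\V)$ solving the fixed-point equation, if $X_k\ne 0$ pick any $\delta_k$ with $\tr X_kX^{(\delta_k)}\ne 0$ and set $c_{\delta_k}=\tr X_kX^{(\delta_k)}/\tr(X^{(\delta_k)})^2$ and $X_{k+1}=X_k-c_{\delta_k}X^{(\delta_k)}$. By Lemma~\ref{lem:decomp}, $X_{k+1}\in{\rm Sym}(\V)$, and it again solves $AX_{k+1}A^*=X_{k+1}$ by linearity of the map $X\mapsto AXA^*$ and Proposition~\ref{prop:sols}. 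Crucially, by the displayed computation in the proof of Lemma~\ref{lem:decomp}, the entries of $X_k$ on the $\delta_k$-class all equal $c_{\delta_k}$, so $X_{k+1}$ vanishes identically on the $\delta_k$-class; hence $\tr X_{k+1}X^{(\delta_k)}=0$, i.e. once a $\delta$ is used its component is gone for good and it never reappears. Since there are only finitely many values of $\delta$ (at most $\lfloor t/2\rfloor+1$), the process halts after finitely many steps at some $X_m$ with $\tr X_mX^{(\delta)}=0$ for \emph{every} $\delta$; since $\sum_\delta X^{(\delta)}=J$ has all entries $1$ and $X_m\ge0$ entrywise, $\sum_\delta \tr X_mX^{(\delta)}=\tr X_mJ=\sum_{ij}(X_m)_{ij}=0$ forces $X_m=0$. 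Unwinding, $X=\sum_k c_{\delta_k}X^{(\delta_k)}$, which is the claimed form.

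It remains to check $c_\delta\ge0$ for each $\delta$ appearing. Here I would note that at the stage $k$ where $\delta_k$ is used, $X_k$ is entrywise nonnegative (it lies in ${\rm Sym}(\V)$), so $\tr X_kX^{(\delta_k)} = \sum_{i\dequiv[\delta_k] j}(X_k)_{ij}\ge0$, while $\tr(X^{(\delta_k)})^2>0$ is just the positive count of $1$'s; hence $c_{\delta_k}\ge0$. For any $\delta$ not used, the corresponding coefficient is $0$, which is also nonnegative. That completes the argument.

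The main obstacle I anticipate is bookkeeping rather than a genuine difficulty: one must be careful that the $\delta$ chosen at each step is legitimately ``still present,'' i.e. that the descent cannot cycle, and that the $c_\delta$ extracted at step $k$ coincides with the global coefficient $\tr XX^{(\delta)}/\tr(X^{(\delta)})^2$ — this follows because $X$ and $X_k$ differ only by combinations of $X^{(\delta')}$ with $\delta'\ne\delta_k$, which are $\tr$-orthogonal to $X^{(\delta_k)}$ by \eqref{eq:ortho}. One should also make the trivial remark that $\delta$ ranges over a finite set (the distinct values of $\dist(i,j)$), so the finiteness of the descent is immediate. Everything else is a direct appeal to Lemmas~\ref{lem:SW} and \ref{lem:decomp} and Proposition~\ref{prop:sols}.
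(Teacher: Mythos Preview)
Your proposal is correct and rests on the same two lemmas as the paper (Lemma~\ref{lem:SW} and Lemma~\ref{lem:decomp}), so in that sense the approaches coincide. The execution differs in two small ways. First, you peel off the $X^{(\delta)}$ one at a time and terminate by observing $\tr X_mJ=\sum_{ij}(X_m)_{ij}=0$; the paper instead subtracts all components at once, setting $Y=X-\sum_{\delta}c_\delta X^{(\delta)}$, and then kills $Y$ via the identity $0=\tr YX^{(\delta)}=\tr Y\,A^*X^{(\delta)}A$, which expands to $\sum_{i\dequiv j}Y_{ij}$ times a positive $A$-factor. Your termination is more elementary. Second---and this is worth emphasizing---your opening paragraph already contains a complete proof: once Lemma~\ref{lem:SW} tells you $X$ is constant on each $\dequiv$-class and $\sum_\delta X^{(\delta)}=J$, the identity $X=\sum_\delta c_\delta X^{(\delta)}$ holds entrywise with $c_\delta=X_{i_0j_0}\ge0$ for any $i_0\dequiv j_0$. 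The descent in your later paragraphs, while correct, is then redundant; so is, in fact, the paper's trace computation with $A^*X^{(\delta)}A$.
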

\begin{proof}
Let $X\in{\rm Sym}(\V)$ satisfy $AXA^*=X$. By Lemma \ref{lem:decomp}, 
$$Y=X-\sum_{\delta\ge0}c_\delta X^{(\delta)}$$
where $$c_\delta=\tr XX^{(\delta)}/\tr (X^{(\delta)})^2$$
is an element of ${\rm Sym}(\V)$ satisfying $\sym{A}{Y}=Y$ and in addition
$$ \tr YX^{(\delta)}=0$$
for all $\delta\ge0$. So for any $\delta\ge0$,
$$0=\tr X^{(\delta)}Y=\tr X^{(\delta)}AYA^*=\tr YA^*X^{(\delta)}A$$
And 
\begin{align}
\tr YA^*X^{(\delta)}A&=\sum_{i,j}Y_{ij}A_{\lambda i}X^{(\delta)}_{\lambda \mu}A_{\mu j}\cr
&=\sum_{i\dequiv j} Y_{ij} \sum_{\substack{l\to i\\m\to j}}A_{li}A_{mj}
\end{align}
where the sum of terms involving products of entries from $A$ is positive, since $A$ is irreducible.
 Hence,
$$\sum_{i\dequiv j}Y_{ij}=0 \qquad \Rightarrow\qquad Y_{ij}=0, \ \forall i\dequiv j$$
for all $\delta$. Hence $Y_{ij}=0$ as required.
\end{proof}
\end{subsection}

%%% Level 2 correspondence %%%
\subsection{Level 2 correspondence}

Now consider the entries of the upper triangular part of an element of ${\rm Sym}_0(\V)$ as coordinates
of a vector in the space $\spow{\V}{2}\approx\RR^{\binom{n}{2}}$. We identify
$\spow{\V}{2}$ with the linear space over $\RR$ generated by ${\rm Sym}_0(\V)$. The row vector
$$X=(x_{12},x_{13},\ldots,x_{n-1\,n}) \leftrightarrow \hat X\in{\rm Sym}_0(\V)$$
where we use the notation $\mat(X)$ for $\hat X$ as necessary for clarity. So we have
$$\mat(X)_{ij}=(\hat X)_{ij}=\begin{cases} x_{ij}&\text{ if } i<j\cr 0&\text{ if } i=j\cr x_{ji}&\text{ if } i>j\cr \end{cases}
$$

We define the matrix $\spow{A}{2}$ on $\spow{\V}{2}$ corresponding to the action 
\begin{align*}
\spow{A}{2}\colon {\rm Sym}_0(\V)\to{\rm Sym}(\V)\cr
X\mapsto \sym{A}{X}
\end{align*}

The following statements are from \cite{PF}. We will use $\dg X.$ for the column vector.\bigskip

\begin{proposition}\label{propXX}  We have\bigskip

1. $\Mat(\spow{A}{2}\dg  X.) = \sym{A}{\hat X}-D$
where $D$ is a diagonal matrix satisfying $\vstrut\tr D=\tr \sym{A}{\hat X}$.\bigskip

2. If $A$ and $X$ have nonnegative entries, then $D$ has nonnegative entries. In particular, in that case,
vanishing trace for $D$ implies that $D$ vanishes.\bigskip

3. Let $X$ and $A$ be nonnegative. Then 
$$
  \hat X = \sym{A}{\hat X}  \Rightarrow  \spow{A}{2}\dg  X. = \dg  X.
$$
\end{proposition}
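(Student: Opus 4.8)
The plan is to prove the three parts in turn, the point being that part~1 is really just an unwinding of the definition of $\spow{A}{2}$ that keeps track of the diagonal discarded in passing from ${\rm Sym}(\V)$ back down to $\spow{\V}{2}$, and that parts~2 and~3 then follow with almost no further work. First I would record part~1: under the identification $X\leftrightarrow\hat X$, the matrix $\spow{A}{2}$ carries the column vector $\dg X.$ to the coordinate vector whose entries are the strictly upper-triangular entries of $\sym{A}{\hat X}$; equivalently, $\Mat(\spow{A}{2}\dg X.)$ is the matrix obtained from $\sym{A}{\hat X}$ by zeroing its main diagonal. Taking $D$ to be the diagonal matrix with $D_{ii}=(\sym{A}{\hat X})_{ii}$, one reads off $\Mat(\spow{A}{2}\dg X.)=\sym{A}{\hat X}-D$, and since $D$ and $\sym{A}{\hat X}$ agree on the diagonal, $\tr D=\tr\sym{A}{\hat X}$.

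Next, for part~2, if $A$ and $X$ have nonnegative entries then $\hat X\ge 0$ entrywise, so every entry of $\sym{A}{\hat X}=A\hat XA^*$ is a sum of products of nonnegative reals, hence nonnegative; in particular the diagonal entries of $D$, being diagonal entries of $\sym{A}{\hat X}$, are nonnegative, and if moreover $\tr D=0$ then this sum of nonnegative numbers can vanish only if $D=0$. Finally, for part~3, suppose $\hat X=\sym{A}{\hat X}$. Since $X\in\spow{\V}{2}$ the matrix $\hat X$ lies in ${\rm Sym}_0(\V)$, hence has zero diagonal, so $\tr D=\tr\sym{A}{\hat X}=\tr\hat X=0$ by part~1, and therefore $D=0$ by part~2. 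Part~1 then gives $\Mat(\spow{A}{2}\dg X.)=\sym{A}{\hat X}=\hat X=\Mat(\dg X.)$, and since $\Mat$ is a linear bijection between $\spow{\V}{2}$ and ${\rm Sym}_0(\V)$ this forces $\spow{A}{2}\dg X.=\dg X.$.

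The one step that genuinely calls for care -- the ``main obstacle,'' such as it is -- is pinning down precisely what $\spow{A}{2}$ is as an honest matrix on $\RR^{\binom n 2}$, namely the composite ``embed $\spow{\V}{2}$ into ${\rm Sym}_0(\V)$, apply $X\mapsto\sym{A}{X}$, then forget the diagonal,'' and then checking the trace identity $\tr D=\tr\sym{A}{\hat X}$ that bookkeeps the forgotten diagonal. Once those identifications are fixed there is no analytic content left -- beyond the observation that a sum of products of nonnegatives is nonnegative -- so parts~2 and~3 are mechanical.
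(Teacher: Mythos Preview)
Your proposal is correct and follows essentially the same approach as the paper: the paper likewise identifies $\Mat(\spow{A}{2}\dg X.)$ with the off-diagonal part of $\sym{A}{\hat X}$ (written there via a ``theta symbol'' $\theta_{ij}$), defines $D$ as the discarded diagonal, and then runs exactly your nonnegativity and trace-vanishing argument for parts~2 and~3. The only cosmetic difference is that the paper records the explicit formula $D_{ii}=2\,x_{\lambda\mu}A_{i\lambda}A_{i\mu}$ before invoking nonnegativity, whereas you appeal directly to $D_{ii}=(\sym{A}{\hat X})_{ii}\ge 0$.
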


\begin{proof} The components of $\spow{A}{2}\dg X.$ are
\begin{align}\label{eq:sym2}
(\spow{A}{2}\dg X.)_{\,ij}  &= \theta_{ij}\theta_{\lambda\mu}
(x_{\lambda\mu}A_{i\lambda }A_{j\mu}+x_{\lambda\mu}A_{i\mu}A_{j\lambda}) \\
 &= \theta_{ij}(\sym{A}{\hat X})_{\,ij}
\end{align}
with the {\sl theta symbol\/} for pairs of single indices
$$\theta_{ij}=\begin{cases} 1, & \text{if } i<j\cr  0, & \text{otherwise} \end{cases}
$$
Note that the diagonal terms of $\hat X$ vanish anyway.
And $\sym{A}{\hat X}$ will be symmetric if $\hat X$ is.
Since the left-hand side has zero diagonal entries, we can remove the theta symbol and compensate by subtracting off
the diagonal, call it $D$. Taking traces yields \#1.
Observe that $D$ has entries
$$
                    D_{\,ii}  = 2\,x_{\lambda\mu}A_{i\lambda }A_{i\mu} 
$$
and \#2 follows directly. For \#3, write
$D= \sym{A}{\hat X}-\Mat(\spow{A}{2}\dg X.)$.
If $\hat X = \sym{A}{\hat X}$, then since $\hat X$ has vanishing trace, $\tr \sym{A}{\hat X}=0$. So
$\tr D=0$, hence $D=0$, by \#2. And $\hat X = \sym{A}{\hat X}=\Mat(\spow{A}{2}\dg X.)$. 
\end{proof}
\begin{remark} Observe from equation \eqref{eq:sym2} that the entries of $\spow{A}{2}$ are the 
permanents of the corresponding $2\times2$ submatrices of $A$.
\end{remark}
\begin{theorem}\label{thmIRPER} \hfill\break\nopagebreak
Let $A$ be irreducible. If $A$ is periodic, then $\det(I-\spow{A}{2}) = 0.$
 
\end{theorem}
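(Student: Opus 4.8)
The plan is to exhibit a nonzero fixed vector for $\spow{A}{2}$, which immediately forces $\det(I-\spow{A}{2})=0$. Since $A$ is periodic of some period $t\ge 2$, the periodic partition $\{P_1,\ldots,P_t\}$ is available, and with it the indicator matrices $X^{(\delta)}$ for $\delta=1,\ldots,\lfloor t/2\rfloor$. Each such $X^{(\delta)}$ with $\delta>0$ lies in ${\rm Sym}_0(\V)$ (zero diagonal, nonnegative entries), and by Proposition~\ref{prop:sols} it satisfies $\sym{A}{X^{(\delta)}}=X^{(\delta)}$. Fix one such $\delta\ge 1$; call the matrix $\hat X = X^{(\delta)}$ and let $X$ be the corresponding row vector of its strictly-upper-triangular entries in $\spow{\V}{2}$.

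Next I would invoke Proposition~\ref{propXX}, part 3: since $A$ and $\hat X$ are nonnegative and $\hat X=\sym{A}{\hat X}$, we get $\spow{A}{2}\dg X. = \dg X.$. So $\dg X.$ is a fixed vector of $\spow{A}{2}$. The only thing left to check is that $\dg X.\ne 0$, i.e.\ that $X^{(\delta)}$ is not the zero matrix for the chosen $\delta$. This is where periodicity with $t\ge2$ is essential: there genuinely exist pairs of vertices at distance $\delta$ for $\delta=1$ (adjacent partition classes are nonempty since $G$ is strongly connected, so there are edges, hence pairs $i\dequiv[1] j$ with $i,j$ in consecutive classes — actually more simply, pick $i\in P_1$, $j\in P_2$, both nonempty). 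Hence $X^{(1)}\ne 0$, so $\dg X.\ne 0$, and $I-\spow{A}{2}$ has nontrivial kernel, giving $\det(I-\spow{A}{2})=0$.

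The only real subtlety — the step I'd flag as the main thing to get right rather than a genuine obstacle — is the bookkeeping in passing between the symmetric-matrix picture and the $\spow{\V}{2}$ coordinate picture, and making sure the hypothesis of Proposition~\ref{propXX}(3) is literally met: we need $\hat X\in{\rm Sym}_0(\V)$ (vanishing diagonal) so that it is in the domain of $\spow{A}{2}$, and we need $\sym{A}{\hat X}$ to again have the right form. Proposition~\ref{prop:sols} gives $\sym{A}{X^{(\delta)}}=X^{(\delta)}$ on the nose for $\delta\ge 1$, and $X^{(\delta)}\in{\rm Sym}_0(\V)$ for $\delta>0$ was noted just before that proposition, so both hypotheses hold; then part 3 of Proposition~\ref{propXX} applies verbatim. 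One should also remark that this argument is constructive: any $X^{(\delta)}$, $1\le\delta\le\lfloor t/2\rfloor$, furnishes such a fixed vector, so the eigenvalue $1$ of $\spow{A}{2}$ in fact has multiplicity at least $\lfloor t/2\rfloor$ — though for the statement as given we only need one. I would close by noting that this gives the "only if" direction of an algebraic characterization of periodicity, the converse (aperiodic $\Rightarrow$ $\det(I-\spow{A}{2})\ne0$) being exactly Lemma~\ref{lem:irrep} together with Proposition~\ref{propXX}.
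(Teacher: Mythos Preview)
Your proof is correct and follows essentially the same route as the paper: exhibit the indicator matrices $X^{(\delta)}$ for $\delta>0$ as nonzero elements of ${\rm Sym}_0(\V)$ fixed by $X\mapsto \sym{A}{X}$ (Proposition~\ref{prop:sols}), then apply Proposition~\ref{propXX}(3) to obtain a nonzero fixed vector of $\spow{A}{2}$. The paper's proof is terser and omits the explicit verification that $X^{(\delta)}\ne0$, but the argument is the same.
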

\begin{proof}If $A$ is periodic, we have seen that there are solutions to $\sym{A}{X}=X$ with
$X\in{\rm Sym}_0(\V)$, namely the indicators $X^{(\delta)}$ for $\delta>0$. By the third statement
of Proposition \ref{propXX}, the corresponding vectors $\dg X^{(\delta)}.$  
satisfy $\spow{A}{2}\dg  X^{(\delta)}. = \dg  X^{(\delta)}.$. Hence 
$\det(I-\spow{A}{2})= 0$ as required.
\end{proof}
\begin{remark} For the converse and for background details on the approach of this part, see
\cite{PF}.
\end{remark}

Here is an example for illustration.\bigskip

\begin{example}     
Let
$$
  A=\left[ \begin {array}{ccccc} 
	0&1&0&0&0\\
	\noalign{\medskip}0&0&1/2&1/2&0\\
	\noalign{\medskip}0&0&0&0&1\\
	\noalign{\medskip}0&0&0&0&1\\
	\noalign{\medskip}1&0&0&0&0
	\end {array} \right] 
$$
and 
$$\spow{A}{2}=
 \left[ \begin {array}{cccccccccc} 0&0&0&0&1/2&1/2&0&0&0&0
\\\noalign{\medskip}0&0&0&0&0&0&1&0&0&0\\\noalign{\medskip}0&0&0&0&0&0
&1&0&0&0\\\noalign{\medskip}1&0&0&0&0&0&0&0&0&0\\\noalign{\medskip}0&0
&0&0&0&0&0&0&1/2&1/2\\\noalign{\medskip}0&0&0&0&0&0&0&0&1/2&1/2
\\\noalign{\medskip}0&1/2&1/2&0&0&0&0&0&0&0\\\noalign{\medskip}0&0&0&0
&0&0&0&0&0&0\\\noalign{\medskip}0&0&0&1&0&0&0&0&0&0
\\\noalign{\medskip}0&0&0&1&0&0&0&0&0&0\end {array} \right] 
$$
Any left-invariant vector of $A$ is a multiple of
$$
  [\,2, 2, 1, 1, 2\,] 
$$

Solutions $X$ to $\spow{A}{2}\dg X.=\dg X.$ satisfy

\begin{equation}\label{eq:example}
  \hat X= \left[ \begin {array}{ccccc} 
	0&w_{{2}}&w_{{1}}&w_{{1}}&w_{{2}}\\
	\noalign{\medskip}w_{{2}}&0&w_{{2}}&w_{{2}}&w_{{1}}\\
	\noalign{\medskip}w_{{1}}&w_{{2}}&0&0&w_{{2}}\\
	\noalign{\medskip}w_{{1}}&w_{{2}}&0&0&w_{{2}}\\
	\noalign{\medskip}w_{{2}}&w_{{1}}&w_{{2}}&w_{{2}}&0
	\end {array} \right]
\end{equation}
with arbitrary $w_i$. 
The periodic classes are  $\{1\},\{2\},\{3,4\},\{5\}$. 
For a fixed $\delta$, a basic right-invariant vector for $\spow{A}{2}$ is given by $x_{ij}=1$ if ${\rm dist\,}(i,j)=\delta$, $0$
otherwise, with $\delta$ equal to 1 or 2.
\end{example}

%%% Determining the partition classes %%%
\begin{subsection}{Determining the partition classes}
We have seen that the solutions in ${\rm Sym}(\V)$ to $\sym{A}{X}=X$ are linear combinations of
$X^{(\delta)}$ for $\delta\ge0$. The indicator of the partition classes is $X^{(0)}$, the only one with a nonzero
entry on the diagonal. With a symbolic program such as Maple, finding the general solution to 
$(I-\spow{A}{2})\dg X.=0$ on $\spow{\V}{2}$ yields a solution of the form similar to that of equation
\eqref{eq:example}. The blocks along the diagonal are complements of the indicators of the classes 
of the periodic partition. In other words, $i$ and $j$ are in the same partition class if and only if
$X_{ij}=0$ for all solutions $X\in {\rm Sym}_0(\V)$ to $\sym{A}{X}=X$. So an alternative is to solve
$(I-\spow{A}{2})\dg X.=0$ numerically for solutions with the side condition that a particular entry equals 1.
Then adding up a basis of such solutions will exhibit the common zeros corresponding to the partition classes.
\end{subsection}

%%% Distribution of probabilities %%%
\begin{subsection}{Distribution of probabilities}
Let $\pi$ denote the invariant measure for the Markov chain. That is, $\pi A=\pi$, with positive entries, and
the eigenvalue 1 of $A$ is simple. So every solution $x$ to $xA=x$ is a multiple of $\pi$.
Let $X\in{\rm Sym}(\V)$ satisfy $\sym{A}{X}=X$. Multiplying on the left by $\pi$ yields
$$\pi AXA^*=\pi X \text{ or }\pi X A^*=\pi X \text{, transposing } 
\implies A\dg(\pi X).=\dg(\pi X).$$
so $\dg (\pi X).$ is a right eigenvector of $A$ with eigenvalue 1, hence a multiple of the all-ones vector $\dg u.$.
Thus, for some positive constant $c$,
$$\pi X=cu$$
Taking $X=X^{(0)}$ shows that each partition class has the same probability $c$. But their disjoint union includes
all vertices and there are $t$ equiprobable classes, thus
$$\pi(P_i)=1/t$$
for each partition class $P_i$. Now consider $X=X^{(\delta)}$, $\delta>0$. A given column, $j$, in $X^{(\delta)}$
has a 1 for every point $\delta$ units from $j$ in the graph. Such points consist either of a single partition class
-- in the case where $t$ is even and $\delta=t/2$ -- or two partition classes. Thus, 
$$\pi X^{(\delta)}=(c_0/t)\,u$$
with $c_0$ equal to 1 or 2 accordingly. For $t$ even we have one class with probability $1/t$ and
$t/2-1$ classes of probability $2/t$, totalling to $1-1/t$. For $t$ odd, we have $(t-1)/2$ classes, all of
probability $2/t$.
\end{subsection}

%%%%%%%%%%%%%%%%%%%%%%%%%%%%%%%%%%%%%%%%%%%%%%%%

\begin{acknowledgment}
We thank J.\tsp Kocik for useful consultations.
\end{acknowledgment}


\begin{thebibliography}{99}

\bibitem{BP}
M.-P.\tsp Beal and D.\tsp Perrin,
A quadratic algorithm for road coloring,
http://arxiv.org/abs/0803.0726v6, 2008.

\bibitem{Bu} 
G. Budzban,  
Semigroups and the generalized road coloring problem, 
\textit{Semigroup Forum}, \textbf{69} (2004), pp. 201--208. 

\bibitem{BPF07} 
G.\tsp Budzban and Ph.\tsp Feinsilver,  
Completely simple semigroups, Lie algebras, and the road coloring problem, 
\textit{Semigroup Forum}, \textbf{74} (2007), pp. 206--226. 

\bibitem{BPF06} 
G.\tsp Budzban and Ph.\tsp Feinsilver,  
A vector space approach to the road coloring problem, 
\textit{Workshop on Words and Automata}, Steklov Institute, St. Petersburg, Russia, (June 2006).


\bibitem{BMu} 
G. Budzban and A. Mukherjea,
A semigroup approach to the road coloring problem, 
\textit{Contemporary Mathematics}, \textbf{261} (2000), pp. 195--207.


\bibitem{WEC} 
W.E.\tsp Clark, 
Remarks on the kernel of a matrix semigroup, 
\textit{Czechoslovak Math. Journal} \textbf{15}(1965), 305--310.

\bibitem{CP} 
A.H.\tsp Clifford and G.B.\tsp Preston, 
\textit{The algebraic theory of semigroups}, vol. 1, AMS, Providence, 1961.

\bibitem{D} 
J.L.\tsp Doob, 
\emph{Stochastic processes}, Wiley, 1990.

\bibitem{PF}
Ph.\tsp Feinsilver,
Zeon algebra, Fock space, and Markov chains,
\textit{Communications on stochastic analysis} \textbf{2}(2008), 263--275.

\bibitem{Fr} 
J.\tsp Friedman, 
On the road coloring problem, \textit{Proceedings of the AMS} \textbf{110} (1990), 1133--1135.

\bibitem{Ho} 
G.\tsp H\"ogn\"as, 
Random semigroup acts on a finite set, \textit{J. Australian Math. Society} \textbf{23} (1977), 481--498.

\bibitem{Ka} 
J.\tsp Kari,  
Synchronizing finite automata on Eulerian digraphs, 
\textit{Theoretical Computer Science}, \textbf{295} (2003), 223--232.

\bibitem{M} 
H.\tsp Minc, \emph{Nonnegative matrices}, Wiley, 1988.

\bibitem{Tr} 
A.\tsp Trakhtman,
The road coloring problem, Israel Journal of Mathematics, to appear. 

\end{thebibliography}
\end{document}